\documentclass[12pt,a4paper]{article}

\usepackage[T1]{fontenc}
\usepackage[utf8]{inputenc}

\usepackage{lmodern}

\usepackage{amsmath,amssymb,amsthm}

\usepackage[a4paper,margin=3cm]{geometry}

\newtheorem{theorem}{Theorem}[section]

\begin{document}

\title{Computing Gamma(p/q) with Beta function values}

\date{\today}

\maketitle

\begin{abstract}
In this expository article we show explicitly how to recursively compute Gamma(p/q) in terms of Beta function values which in turn are Kontsevich-Zagier periods.
\end{abstract}

\section{Introduction}
It was shown 25 years ago that $\Gamma(p/q)^q$ is a Kontsevich-Zagier Period, where $p,q \in \mathbb{N}$. \\ We want to compute this explicitly and assume $1 \leq p \leq q$. \\ \\A KZ Period is, roughly speaking, the integral of an algebraic function taken over a semi-algebraic domain, with coefficients in $\mathbb{Q}$. \\ \\
Define the following variation for $s \in \mathbb{C}$ and fix the principal branch \\ of the logarithm once and for all.
\begin{equation}
I_s := 2 \cdot \int \limits_{0}^{1} \frac{dx}{\sqrt{1 - x^s}} = \frac{2}{s} \cdot B(\frac{1}{s}, \frac{1}{2}) \ , \ \Re(s) \geq 1
\end{equation}
\\ \\
The following three equations effect a computation chain for $[p] \in \mathbb{Z} / q \mathbb{Z}$:

\begin{enumerate}
\renewcommand{\labelenumi}{\Roman{enumi})}
    \item \begin{equation}
    \Gamma(s+1)= s \cdot \Gamma(s) \ , \ \forall s \in \mathbb{C} \setminus - \mathbb{N}_0
    \end{equation}
    \item \begin{equation}
    \Gamma(\frac{1}{2s}) = \sqrt{\frac{2s \cdot I_{2s} \cdot \Gamma(\frac{1}{s})}{2^{(1/s)}}} \ , \ \Re(s) \geq \frac{1}{2}
    \end{equation}
    \item \begin{equation}
    \Gamma(s) \cdot \Gamma(1-s)=\frac{\pi}{\sin(\pi s)} \ , \ \forall s \in \mathbb{C} \setminus \mathbb{Z}
    \end{equation}
\end{enumerate}

The second equation follows from the Legendre Duplication formula via the Beta function.

\newpage
\section{Examples}
Due to the second equation, we can assume that $q$ is odd, because it allows us to relate $\Gamma(\frac{p}{2q}) \leftrightarrow \Gamma(\frac{p}{q})$. For example:
\begin{equation}
\Gamma(\frac{1}{2}) = \sqrt{I_2} = \sqrt{\pi}
\end{equation} \\
Similarly for $q=4$:
\begin{equation}
\Gamma(\frac{1}{4}) = \sqrt{2 \cdot I_4 \sqrt{2 \cdot I_2}}
\end{equation} \\
Our first nontrivial example is $q=3$:
\begin{equation}
\Gamma(\frac{1}{3}) \to \Gamma(\frac{2}{3}) \to \Gamma(\frac{1}{3})
\end{equation}
So we calculate with $\mathrm{II})$ using $s := \frac{3}{2}$:
\begin{equation}
    \Gamma(\frac{1}{3}) = \sqrt{\frac{3 \cdot I_{3} \cdot \Gamma(\frac{2}{3})}{2^{(2/3)}}}
    \end{equation}
    and then use the Euler relation for $\Gamma(2/3)$ and solve for $\Gamma(1/3)$. \\ \\
Similarly for $q = 5$:
\begin{equation}
\Gamma(\frac{1}{5}) \to \Gamma(\frac{2}{5}) \to \Gamma(\frac{4}{5}) \to \Gamma(\frac{1}{5})
\end{equation} \\    
\begin{equation}
\Gamma(\frac{2}{5}) \to \Gamma(\frac{4}{5}) \to \Gamma(\frac{1}{5}) \to \Gamma(\frac{2}{5})
\end{equation}
or equivalently,
\begin{equation}
\Gamma(\frac{2}{5}) \to \Gamma(\frac{4}{5}) \to \Gamma(\frac{8}{5}) \to \Gamma(\frac{3}{5}) \to \Gamma(\frac{6}{5}) \to \Gamma(\frac{1}{5}) \to \Gamma(\frac{2}{5})
\end{equation} \\ \\
We can already see here that there are multiple ways to get a recursive computation chain that closes back to the starting value. \\
However, there is a unique minimal i.e. shortest computation chain. \\ \\
For $q = 7$ we have the first example where, after substitution, a meromorphic differential form appears with the value $I_{7/4}$:
\begin{equation}
\Gamma(\frac{1}{7}) \to \Gamma(\frac{2}{7}) \to \Gamma(\frac{4}{7}) \to \Gamma(\frac{8}{7}) \to \Gamma(\frac{1}{7})
\end{equation}
A final example related to a specific value of an elliptic L-function:
\begin{equation}
\Gamma(\frac{3}{7}) \to \Gamma(\frac{6}{7}) \to \Gamma(\frac{12}{7}) \to \Gamma(\frac{5}{7}) \to \Gamma(\frac{10}{7})\to \Gamma(\frac{3}{7})
\end{equation}

\newpage
\begin{theorem}
For every odd $q$, there is a recursive computation chain that always closes back to $\Gamma(\frac{p}{q})$ after finitely many steps.
\end{theorem}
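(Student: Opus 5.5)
The plan is to read the chain as an orbit of multiplication by $2$ on $(\mathbb{Z}/q\mathbb{Z})^\times$. Since $q$ is odd, $\gcd(2,q)=1$, so $2$ is a unit modulo $q$. For $1\le p\le q$ with $\gcd(p,q)=1$, we may reduce to that case: if $d=\gcd(p,q)>1$, replace $p/q$ by $(p/d)/(q/d)$, whose denominator is still odd.

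Each link of the chain is one of two moves. The first is a doubling step $\Gamma(\frac{a}{q})\to\Gamma(\frac{2a}{q})$. This is exactly relation II) with $s:=\frac{q}{2a}$, since then $\frac{1}{2s}=\frac{a}{q}$ and $\frac1s=\frac{2a}{q}$. It expresses $\Gamma(\frac{a}{q})$ algebraically in terms of $I_{q/a}$ and $\Gamma(\frac{2a}{q})$. The hypothesis $\Re(s)\ge \frac12$ requires $2a\le q$. The second is a reduction step: whenever $2a>q$, we use I) (shifting by $1$) or III) (the reflection $a\mapsto q-a$) to replace $\Gamma(\frac{2a}{q})$ by a Gamma value whose argument lies in $(0,1]$ and is congruent to $\pm 2a$ modulo $q$. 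This is what the examples do, e.g. $\Gamma(\frac{8}{7})=\frac17\Gamma(\frac17)$ for $q=7$.

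The core of the proof is finiteness. Let $n=\mathrm{ord}_q(2)$, which is finite by Euler's theorem since $2^{\varphi(q)}\equiv 1 \pmod q$. Starting from $a_0=p$, define $a_{k+1}\equiv 2a_k \pmod q$ with $a_{k+1}\in\{1,\dots,q-1\}$. Then $a_n\equiv 2^n p\equiv p$, so after at most $n\le\varphi(q)$ doubling steps, each followed by an application of I), we return to $\Gamma(\frac{p}{q})$. One could also use the coarser orbit of $\pm 2$ in $(\mathbb{Z}/q\mathbb{Z})^\times/\{\pm1\}$ together with III); this gives the shorter chains in the examples.

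The step I expect to be the main obstacle is checking the hypothesis $\Re(s)\ge\frac12$ at every doubling step. If $a_k<q/2$, then $s=\frac{q}{2a_k}\ge 1$ and II) applies directly, producing $\Gamma(\frac{2a_k}{q})$ with $\frac{2a_k}{q}<1$. If instead $a_k>q/2$, I would first use III) to pass to $\Gamma(\frac{q-a_k}{q})$, whose numerator is below $q/2$, and then double. So I would work with representatives $b_k\in\{1,\dots,\frac{q-1}{2}\}$ of $\pm 2^k p$, which keeps every application of II) within its stated domain. This shows the chain closes in finitely many steps, at most $\varphi(q)/2$ up to sign.

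Finally, I would check that the closed chain actually determines $\Gamma(\frac pq)$. Composing the relations around the cycle gives
\[
\Gamma\!\left(\tfrac pq\right)=C\cdot\Gamma\!\left(\tfrac pq\right)^{\pm 2^{-m}},
\]
where $C$ is an explicit product of powers of the $I_{q/b}$, of $\pi$, of $\sin(\pi b/q)$, of rationals and of powers of $2$. Here $m\ge1$ counts the square roots taken, and the sign records the parity of the number of reflections. Since the exponent $\pm 2^{-m}\neq 1$, we can solve for $\Gamma(\frac pq)$ by raising to a rational power, exactly as in the $q=3$ example. This yields the closed recursive expression claimed in the theorem.
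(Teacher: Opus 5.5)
Your proposal is correct and rests on the same key fact as the paper's proof: $2$ has finite multiplicative order modulo the odd number $q/\gcd(p,q)$. It also goes further than the paper by tying each residue move to relations I)--III) and by checking that the closed cycle can be solved for $\Gamma(\frac pq)$, since the exponent $\pm 2^{-m}$ is not $1$. One harmless slip: with $s=\frac{q}{2a}$ the condition $\Re(s)\ge\frac12$ means $a\le q$, not $2a\le q$, so the reflection detour in your domain check is unnecessary (the paper's $q=7$ chain doubles at $a=4$, which is exactly where $I_{7/4}$ arises), though the restricted chain you build still closes.
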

\begin{proof}
Let the allowed operations on \(p \pmod q\) be
\begin{enumerate}
\renewcommand{\labelenumi}{\Roman{enumi})}
\item
\begin{equation}
p \mapsto p \pmod q
\end{equation}
\item
\begin{equation}
p \mapsto 2p \pmod q
\end{equation}
\item
\begin{equation}
p \mapsto q-p \equiv -p \pmod q
\end{equation}
\end{enumerate}
where $q$ is odd.

Starting from \(2p \pmod q\), every sequence of operations produces a residue of the form
\[
\pm 2^n p \pmod q
\]
for some \(n\ge1\).

Let \(d=\gcd(p,q)\) and \(q'=q/d\). To return to \(p \pmod q\), we need
\[
\pm 2^n p \equiv p \pmod q,
\]
which is equivalent to
\[
\pm 2^n \equiv 1 \pmod {q'}.
\]

Since \(q'\) is odd, \(2\) is invertible modulo \(q'\). Hence \(2\) has finite order in
\[
(\mathbb Z/q'\mathbb Z)^\times,
\]
so there exists \(m>0\) such that
\[
2^m \equiv 1 \pmod {q'}.
\]
Therefore
\[
2^m p \equiv p \pmod q.
\]

So, after finitely many applications of rule (2), starting from our first operation \(2p \pmod q\), we return to \(p \pmod q\).
\end{proof}

\newpage
\section{Conjectures on Riemann surface interpretation}
In the rational case we have the following geometric interpretation for $n \geq 3$:
\begin{equation}
I_{n/k} = 2k \cdot \int \limits_{0}^{1} \frac{x^{k-1}}{\sqrt{1 - x^n}} \ dx = k \cdot \int \limits_{\gamma} \omega_{k-1}
\end{equation}
are integrals over the non-closed path $\gamma$ on the compact Riemann surface $X_n$ of the curve \\
\begin{equation}
C_n : y^2 = 1 - x^n \ , \ (x,y) \in \mathbb{C}^2
\end{equation} \\
where originally $x: 0 \to 1 \to 0$ and so $\gamma: P(0,1) \to P(1,0) \to P(0,-1)$ where we changed sheets at the branch point $1$. \\ \\
We see that $I_{n/k}$ is a Kontsevich-Zagier Period for $1 \leq k \leq n$ (for $k > n$ it can be reduced trivially), which follows from the integral representation, but \\not an integral of a holomorphic differential form over a closed cycle. \\ \\
The genus of $X_n$ is $g = \lfloor \frac{n-1}{2} \rfloor$. \\ \\
For $1 \leq k \leq g$ the differential form is holomorphic and for $g < k \leq n$ it is \\ meromorphic with a pole at infinity, which appeared with $I_{7/4}$ for $\Gamma(1/7)$. \\ \\
Finally, the integrals $I_{n/k}$ can (conjecturally) be seen as "basic building blocks" that may express genuine periods of $X_n$: \\ \\
Exploiting the symmetry of the branch points $\zeta_{n}^{m} = \exp(\frac{2 \pi i m}{n})$ we have (assuming a straight line connecting both endpoints): \\
\begin{equation}
\int_{1}^{\zeta_n}\frac{dx}{\sqrt{1 - x^n}} = (1 - \zeta_n) \cdot \int_{0}^{1}\frac{dx}{\sqrt{1 - x^n}}
\end{equation} \\ \\
These types of branch point integrals are used in the $A$- and $B$ period  matrix of $X_n$, but the RHS may need some correction factors due to square root, sheet changes and the fact that Cauchy's integral theorem does not hold in general. \\ \\
A reduction of $I_n$ to $\alpha \cdot I_d$ where $d$ divides $n$ and $\alpha$ an algebraic number, could therefore (conjecturally) be related to the decomposition of the Jacobian $J_n$ over a suited base field.

\newpage
\section{Table of Values (minimal computation chain)}

We can see here very clearly (and beautifully) that $\Gamma(p/q)^q$ is a Kontsevich-Zagier period. \\ \\
Additionally the $I_{n/k}$ are transcendental, which follows from the Beta function \\ representation and Schneider's theorem that for $a,b \in \mathbb{Q}$ but $a \notin \mathbb{Z}$ and $b \notin \mathbb{Z}$ and $a + b \notin \mathbb{Z}$, then $B(a,b)$ is transcendental.\\ \\
This is interesting, because if $\Gamma(1/q)$ is known to be transcendental, it would imply that a specific product of "independent" transcendental constants is itself transcendental. \\

\begin{table}[h!]
\centering

\renewcommand{\arraystretch}{2.4}

\begin{tabular}{|c|c|c|c}
\hline

\Large $q$
&
\Large $\Gamma\!\left(\frac{1}{q}\right)$
&
\Large \text{Elliptic Period K(k)?}
&
\Large \text{Meromorphic Form?}
\\

\hline

\Large $2$
&
\Large $\sqrt{I_2}$
&
 \textbf{Yes}
&
\textbf{No}
\\

\hline

\Large $3$
&
\Large
$\displaystyle
\left(
2^{1/3}\sqrt{3}\,I_2\, I_3
\right)^{1/3}
$
&
 \textbf{Yes}
&
\textbf{No}
\\

\hline

\Large $4$
&
\Large
$\displaystyle
\sqrt{2 I_4 \sqrt{2 I_2}}
$
&
 \textbf{Yes}
&
\textbf{No}
\\

\hline

\Large $5$
&
\Large
$\displaystyle
\left(
\frac{
125 I_2 \, I_5^{\,2}\, I_{5/2}
}{
2^{13/5}\sin(\pi/5)
}
\right)^{1/5}
$
&
 \textbf{???}
&
\textbf{No}
\\

\hline
\Large $6$
& 
\Large \text{use II) on Gamma(1/3)}
&
\textbf{Yes}
&
\textbf{No} \\
\hline

\hline
\Large $7$
&
\Large 
$\displaystyle
\left(
\frac{
7^6\,I_7^4 I_{7/2}^2 I_{7/4}
}{
2^{52/7}
}
\right)^{1/7}$
&
\textbf{???}
&
\textbf{Yes} \\
\hline
\Large $8$
&
\Large
$\displaystyle \sqrt{4 I_8 \sqrt{4 I_4 \sqrt{4 I_2}}}$
&
\textbf{Yes}
&
\textbf{No} \\
\hline
\end{tabular}
\end{table}

1) "Elliptic Period K(k)" asks if there is a description using the \\Elliptic K function for algebraic modulus $k$ with algebraic numbers as factors and possibly using $n$-th roots. \\ \\
2) "Meromorphic Form" asks if at least one $I_{n/k}$ appears with $k > g$.

\newpage
\section{Further Reading}

\end{document}